\newtheorem{Theorem}{Theorem} 
\newtheorem{Proposition}[Theorem]{Proposition}
\newtheorem{Lemma}[Theorem]{Lemma}
\newtheorem{Corollary}[Theorem]{Corollary}
\def\R{\mathbb R}
\def\N{\mathbb N}
\def\E{\mathbb E}
\def\ds{\displaystyle}
\title{Estimate for $P_tD$ for the stochastic Burgers equation}
\author{Giuseppe Da Prato  \thanks{Giuseppe Da Prato, Scuola Normale Superiore, 56126, Pisa, 
Italy.   e-mail:   giuseppe. daprato@sns.it } and Arnaud Debussche \thanks{
Arnaud Debussche, IRMAR and \'Ecole Normale Sup\'erieure de Rennes, Campus de Ker Lann, 37170 Bruz, France. e-mail:arnaud.debussche@ens-rennes.fr}}
\begin{document}

\maketitle

\begin{abstract}
We consider the Burgers equation on $H=L^2(0,1)$ perturbed by white noise and the corresponding transition semigroup $P_t$. We prove a new formula for  $P_tD\varphi$ (where  $\varphi:H\to\R$ is bounded and Borel) which  depends on $\varphi$ but not on its derivative. 
Then we deduce some new consequences   for the invariant measure  $\nu$ of $P_t$ as its  Fomin differentiability and an  integration by parts formula which generalises the classical one  for gaussian measures.

\end{abstract}

\noindent {\bf 2000 Mathematics Subject Classification AMS}: 60H15, 35R15\medskip

\noindent {\bf Key words}: Stochastic Burgers equation, invariant measure, Fomin differentiability.

\section{Introduction}
We consider the following stochastic Burgers equation in the interval $[0,1]$ with Dirichlet boundary conditions,
\begin{equation}
\label{e0.1}
\left\{\begin{array}{lll}
dX(t,\xi)&=&(\partial_\xi^2 X(t,\xi)+\partial_\xi (X^2(t,\xi)))dt+dW(t,\xi),\; t>0,\; \xi \in (0,1),\\
X(t,0)&=&X(t,1)=0,\; t>0,\\
X(0,\xi)&=&x(\xi),\; \xi\in (0,1).
\end{array}\right. 
\end{equation}
The unknown $X$ is a real valued process depending on $\xi\in [0,1]$ and $t\ge 0$ and 
$dW/dt$ is a space-time white noise on $[0,1]\times [0,\infty)$. This equation has been studied 
by several authors (see \cite{bertini},  \cite{DaDeTe94}, 
\cite{DPG}, \cite{gyongy})
and it is known that there exists a unique solution with paths in 
$C([0,T];L^p(0,1))$ if the initial data $x\in L^p(0,1)$, $p\ge 2$.
In this article, we want to prove new properties on the transition semigroup associated to 
\eqref{e0.1}.

We rewrite \eqref{e0.1} as an abstract differential equation in the Hilbert space $H=L^2(0,1)$,
\begin{equation}
\label{e1}
\left\{\begin{array}{l}
dX=(AX+b(X))dt+dW_t,\\
\\
X(0)=x.
\end{array}\right. 
\end{equation}
As usual  $A=\partial _{\xi\xi}$ with Dirichlet boundary conditions, on the domain
$
D(A)=H^2(0,1)\cap H^1_0(0,1)$, $b(x)=\partial _{\xi}\,(x^2)$. Here and below, for $s\ge 0$, $H^s(0,1)$ is the standard $L^2(0,1)$ based Sobolev space.  Also, $W$ is a cylindrical Wiener 
process on $H$.
We denote by $X(t,x)$ the solution.

We denote by   $(P_t)_{t\ge 0}$ the transition semigroup   associated  to equation \eqref{e1} on $\mathcal B_b(H)$, the space of all real bounded and Borel functions on $H$ endowed with the norm
$$
\|\varphi\|_0=\sup_{x\in H}|\varphi(x)|,\quad \forall\;\varphi\in \mathcal B_b(H).
$$
We know that $P_t$   possesses a unique invariant measure $\nu$ so that  $P_t$ is uniquely extendible to  a strongly continuous semigroup of contractions on $L^2(H,\nu)$ (still denoted by $P_t$)  whose infinitesimal generator   we shall denote by $\mathcal L$. Let  $\mathcal E_A(H)$ be 
  the linear span of real parts of all $\varphi$ of the form
$$\varphi_h(x):=e^{i \langle h,x   \rangle},\;x\in D(A).
$$
We have proved in \cite{DaDe07}   that  $\mathcal E_A(H)$
is a core for $\mathcal L$ and that
\begin{equation}
\label{e2z}
\mathcal  L\varphi=\frac12\mbox{\rm Tr}\;[QD_x^2\varphi]+\langle Ax+b(x)  ,D_x\varphi\rangle,\quad \forall\;\varphi\in \mathcal E_A(H).
\end{equation}
 \medskip
Here and below, $D_x$ denotes the differential with respect to $x\in H$. When $\varphi$ is a real 
valued function, we often identify $D_x\varphi$ with its gradient. Similarly, $D^2_x$ is the second
differential and for a real valued function $D^2_x\varphi$ can be identify with the Hessian. 

\medskip

In this paper, we use a formula    for  $P_tD_x\varphi$  which depends on $\varphi$ but not on its derivative. To our knowledge, this formula is new. 

For a finite dimensional stochastic equation a formula for $P_tD_x$ can be obtained, under suitable assumptions, using the Malliavin calculus and it is the key tool  for proving the existence of a density of the law of $X(t,x)$ with respect to the Lebesgue measure, see \cite{Ma97}.  Concerning SPDEs, several results are available for   densities of finite dimensional projections of the law of the solutions, see \cite{Sa05} and the references therein. For these results, Malliavin calculus is used on a finite 
dimensional random variable. Malliavin calculus is difficult to generalize to a true infinite dimensional 
setting and it does not seem useful to give estimate on $P_tD_x\varphi$ in terms of $\varphi$. The 
formula we use allows a completely different approach. It relates $P_tD_x$ to $D_xP_t$. 
In the recent years several formulae for $D_xP_t\varphi$
independent on  $D_x\varphi$ have been proved  thanks to  suitable generalizations of the Bismut--Elworthy--Li
 formula (BEL). Thus, combining our formula to estimates obtained on $D_xP_t$ implies useful 
 information on $P_tD_x$.
As we shall show, these can be used to extend to the measure $\nu$   a basic integration by parts identity well known for Gaussian measures.\medskip

Let us explain the main ideas. Let $u(t,x)=P_t\varphi(x)$, under suitable conditions, it is  a  solution of the Kolmogorov equation
\begin{equation}
\label{e3z}
\left\{\begin{array}{l}
\ds D_tu(t,x)= \frac12\mbox{\rm Tr}\;[QD_x^2u(t,x)]+\langle Ax+b(x)  ,D_x u(t,x)\rangle,\\
\\
u(0,x)=\varphi(x).
\end{array}\right.
\end{equation}
Set $v_h(t,x)=D_xu(t,x)\cdot h$, the differential of $u$ with respect to $x$ in the direction $h$. Then formally $v_h$ is a solution to the equation
\begin{equation}
\label{e4z}
\left\{\begin{array}{l}
\ds D_tv_h(t,x)= \mathcal L v_h(t,x) +\langle Ah+b'(x)h  ,D_x u(t,x)\rangle,\\
\\
v_h(0,x)=D_x\varphi(x)\cdot h.
\end{array}\right.
\end{equation}
 We notice that formal computations may be made rigorous by approximating   $\varphi$ with elements from $\mathcal E_A(H)$. By \eqref{e4z} and variation of constants, it follows that
\begin{equation}
\label{e7z}
 v_h(t,x)=P_t(D_x\varphi(x)\cdot h)  + \int_0^tP_{t-s}(\langle Ah+b'(x)h  ,D_x u(s,x)\rangle)ds,
\end{equation}
which implies
\begin{equation}
\label{e8z}
  P_t(D_x\varphi(x)\cdot h)= D_xP_t\varphi(x)\cdot h-   \int_0^tP_{t-s}(\langle Ah+b'(x)h  ,D_x u(s,x)\rangle)ds.
\end{equation}
This formula allows to obtain the following estimate:
For all  $\varphi\in \mathcal B_b(H)$,     $\delta>0$ and all $h\in H^{{1+\delta}}(0,1)$, we have
\begin{equation}
\label{e2d}
|P_t(D\varphi\cdot h)(x)|\le ce^{ct}(1+t^{-1/2})(1+|x|_{L^4})^{8}\,\|\varphi\|_{0}\,|h|_{1+\delta},
\end{equation}
where $|\cdot|_{1+\delta}$ is the norm in $H^{1+\delta}(0,1)$. We will not prove this formula here, it could be proved by similar arguments as in section \ref{s3}. 

Integrating with respect to $\nu$ over $H$ and taking into account   the invariance of $\nu$, yields
\begin{equation}
\label{e9}
\begin{array}{lll}
 \ds \int_H (D_x\varphi(x)\cdot h)d\nu&=&\ds \int_H(D_xP_t\varphi(x)\cdot h)\,d\nu\\
 \\
 &&- \ds  \int_0^t\int_H (\langle Ah+b'(x)h  ,D_xP_s\varphi(x) \rangle)d\nu\, ds.
 \end{array}
\end{equation}
Using identity \eqref{e9} we arrive at  the main result of the paper, proved in Section 2.
\begin{Theorem}
\label{t1}
For any $p>1$, $\delta>0$, there exists $C>0$ such that for all  $\varphi\in  \mathcal B_b(H)$ and all $h\in H^{1+\delta}(0,1)$, we have
\begin{equation}
\label{e2d}
\int _H D_x\varphi(x)\cdot h \, \nu(dx) \le C\|\varphi\|_{L^p(H,\nu)} \,|h|_{1+\delta},
\end{equation}
 for $t>0$, where $|\cdot|_{1+\delta}$ is the norm in $H^{1+\delta}(0,1)$.  \end {Theorem}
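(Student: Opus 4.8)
The plan is to start from identity \eqref{e9}, which is already established for every $\varphi\in\mathcal B_b(H)$ and $h\in H^{1+\delta}(0,1)$, and control each of the two terms on its right-hand side. The key point is that the left-hand side of \eqref{e9} is \emph{independent of $t$}, so we are free to optimise over $t>0$ — in fact we will let $t\to+\infty$, exploiting the ergodicity of $\nu$ together with whatever decay and smoothing estimates are available for $D_xP_t\varphi$. For the first term $\int_H D_xP_t\varphi\cdot h\,d\nu$, I would invoke a Bismut--Elworthy--Li type bound of the form $|D_xP_t\varphi(x)\cdot h|\le c\,e^{ct}(1+t^{-1/2})(1+|x|_{L^4})^{N}\|\varphi\|_0\,|h|$ (the kind of estimate alluded to in the introduction, with the regularity index $1+\delta$ on $h$ needed to tame the $Ah$ contribution); integrating in $x$ against $\nu$, which has moments of all orders in $L^4$, gives a bound $c\,e^{ct}(1+t^{-1/2})\|\varphi\|_0\,|h|_{1+\delta}$. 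That alone is not enough because of the $e^{ct}$ factor, so the real work is to combine it with the second term.

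For the integral term $\int_0^t\int_H\langle Ah+b'(x)h,\,D_xP_s\varphi(x)\rangle\,d\nu\,ds$, the idea is to rewrite $\langle Ah+b'(x)h,D_xP_s\varphi\rangle$ as $\langle h, (A+b'(x)^*)D_xP_s\varphi\rangle$ and then use the estimate on $D_xP_s\varphi$ \emph{again}, this time carefully: the pairing with $A$ costs two derivatives on $h$ or, dually, can be moved onto $P_s\varphi$, and the term $b'(x)^*$ contributes a polynomially growing factor in $|x|_{L^4}$ which is $\nu$-integrable. The crucial structural fact I expect to use is that $P_s\varphi$ is itself bounded by $\|\varphi\|_0$, and more importantly that after one application of $P_s$ we may replace $\|\varphi\|_0$ by something like $\|\varphi\|_{L^p(H,\nu)}$: indeed, by the invariance of $\nu$ and the hypercontractivity/ultraboundedness or at least the $L^p$--$L^\infty$ smoothing of $P_t$ (or simply by splitting $\varphi = P_\varepsilon\varphi$-type arguments and using \eqref{e9} for the regularized object), one upgrades the $\|\varphi\|_0$ on the right to $\|\varphi\|_{L^p(H,\nu)}$. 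Concretely, I would first prove the statement with $\|\varphi\|_0$ in place of $\|\varphi\|_{L^p(H,\nu)}$ for $\varphi\in\mathcal E_A(H)$ or bounded continuous $\varphi$, and then bootstrap: apply the already-proven inequality to $P_{t_0}\varphi$ for fixed $t_0$, use $D_x(P_t P_{t_0}\varphi) = D_x P_{t+t_0}\varphi$ and the $t$-independence of the left side, and estimate $\|P_{t_0}\varphi\|_0$ or rather the relevant norm of $D_xP_{t_0}\varphi$ in terms of $\|\varphi\|_{L^p(H,\nu)}$ using the smoothing estimate integrated against $\nu$ and Hölder's inequality in $(H,\nu)$.

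Assembling the pieces: choosing $t$ large (say $t=1$, or sending $t\to\infty$ if a decay estimate for $D_xP_t$ in $L^p(H,\nu)$ is available) kills or neutralises the growth, and the singular factor $(1+s^{-1/2})$ in the integrand is integrable near $s=0$, so $\int_0^t(1+s^{-1/2})\,ds<\infty$; the polynomial moments of $\nu$ make all the $x$-integrations finite. What remains is a bound of the form $C\,\|\varphi\|_{L^p(H,\nu)}\,|h|_{1+\delta}$ with $C$ independent of $t$ (after optimisation) and of $\varphi,h$, as claimed. Finally one removes the restriction $\varphi\in\mathcal E_A(H)$ (or $\varphi$ continuous) by a density argument: the right-hand side only involves $\|\varphi\|_{L^p(H,\nu)}$, and $\mathcal E_A(H)$ is dense in $L^p(H,\nu)$, while the left-hand side $\int_H D_x\varphi\cdot h\,d\nu$ is a closable operation on this space, so the inequality passes to the closure and hence to all $\varphi\in\mathcal B_b(H)$.

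\textbf{Main obstacle.} The delicate point is handling the unbounded operator $A$ in the term $\langle Ah,D_xP_s\varphi\rangle$: one cannot simply bound $|Ah|$ by $|h|_{1+\delta}$, so the derivative must be moved — either onto $P_s\varphi$ (requiring a second-order estimate, or fractional powers $(-A)^{(1-\delta)/2}$ distributed between $h$ and $D_xP_s\varphi$) or absorbed via an integration by parts in the $s$-variable that trades $A$ for a time derivative using the Kolmogorov equation \eqref{e3z}. Getting this interpolation to close with exactly the exponent $1+\delta$ on $h$, while keeping all $x$-integrals within the polynomial-moment budget of $\nu$ and the $s$-integral convergent at $s=0$, is where the real care is needed; the nonlinear term $b'(x)h=2\partial_\xi(xh)$ also costs a derivative, which is why $h\in H^{1+\delta}$ rather than merely $H^1$ is required.
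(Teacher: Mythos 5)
Your starting point (identity \eqref{e9}, invariance of $\nu$, fixing a finite time --- the paper simply takes $t=1$, no ergodicity or $t\to\infty$ argument is needed or even helpful given the $e^{ct}$ factor) matches the paper, and you correctly sense that the $Ah$ term must be handled by distributing fractional powers of $-A$ between $h$ and $D_xP_s\varphi$. But the two points on which the proof actually turns are left unresolved, and the mechanisms you propose for them would not work. First, the upgrade from $\|\varphi\|_0$ to $\|\varphi\|_{L^p(H,\nu)}$: you suggest hypercontractivity, $L^p$--$L^\infty$ smoothing of $P_t$, or a bootstrap through $P_{t_0}\varphi$, none of which is available for the Burgers transition semigroup (and estimating $\|P_{t_0}\varphi\|_0$ by $\|\varphi\|_{L^p(H,\nu)}$ is precisely such an unavailable $L^p\to L^\infty$ bound, so the bootstrap is circular). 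The paper instead works with the Feynman--Kac semigroup $S_t\varphi(x)=\E[\varphi(X(t,x))e^{-K\int_0^t|X(s,x)|^4_{L^4}ds}]$ and proves a \emph{pointwise} gradient estimate (Lemma \ref{l2}) in which the factor multiplying the singular terms is $[\E(\varphi^p(X(t,x)))]^{1/p}$ rather than $\|\varphi\|_0$; Hölder's inequality plus the invariance of $\nu$, $\int_H\E(\varphi^p(X(t,x)))\,\nu(dx)=\|\varphi\|^p_{L^p(H,\nu)}$, then yields the integrated bound (Lemma \ref{l3}), and one returns from $S_t$ to $P_t$ via $P_t\varphi=S_t\varphi+K\int_0^tS_{t-s}(|x|^4_{L^4}P_s\varphi)\,ds$. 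Without some substitute for this device your argument only ever produces $\|\varphi\|_0$ on the right-hand side, which is strictly weaker than the theorem.

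Second, the term $\langle Ah+b'(x)h,D_xP_s\varphi\rangle$, which you yourself flag as the main obstacle, is exactly where the quantitative content lies: the paper's Lemma \ref{l2}--\ref{l3} control $|D_xS_t\varphi(x)|_\alpha$ in the \emph{positive} norm $|\cdot|_\alpha$ with singularity $t^{-(1+\alpha)/2}$, so that taking $\alpha=1-\delta$, $q=2/\delta$ one can pair with the $x$-dependent vector $h(x)=Ah+b'(x)h$ measured in $|\cdot|_{-1+\delta}$; then $|Ah|_{-1+\delta}\le|h|_{1+\delta}$, $|b'(x)h|_{-1+\delta}\le c|xh|_\delta\le c|x|_\delta|h|_1$, the time singularity $s^{-1+\delta/2}$ is integrable, and one still needs the nontrivial moment bound $\int_H|x|_\delta^{\delta/2}\nu(dx)<\infty$ for the invariant measure (proved via the decomposition $X=Y+z_\alpha$), not merely $L^4$ moments as you assert. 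Your alternative suggestion of integrating by parts in $s$ via the Kolmogorov equation is not carried out and is not what makes the exponent $1+\delta$ close. So the proposal has the right skeleton but is missing the paper's key lemmas; as written it is a plan with genuine gaps rather than a proof.
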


For a gaussian measure, it is easy to obtain such estimate. In fact, if $\mu$ is the invariant measure 
of the stochastic heat equation on $(0,1)$, {\it i.e.}  equation \eqref{e1} without the nonlinear term, then 
the same formula holds with $\delta=0$. Thus our result is not totally optimal and we except that it
can be extended to $\delta=0$.

Also, identity \eqref{e9} is general and we believe that it can be used in many other situations. For instance, we will investigate the generalization of our results to other SPDEs such as reaction--diffusion and 2D- Navier--Stokes equations.  This will be the object of a future work.

In section \ref{s2}, we show that Theorem \ref{t1} can be used to derive an integration by part
formula for the measure $\nu$. Theorem \ref{t1} is proved in section \ref{s3}. 
 
 We end this section with some notations.
 We shall denote by $(e_k)$ an orthonormal basis in $H$ and    by $(\alpha_k)$ a sequence of positive numbers such that
 $$
 Ae_k=-\alpha_k e_k,\quad k\in\N.
 $$
 For any $k\in\N$,  $D_k$ will represent the directional derivative in the direction of $e_k$.

The norm of $L^2(0,1)$ is denoted by $|\cdot|$. For $p\ge 1$, $|\cdot|_{L^p}$ is  the norm of $L^p(0,1)$. 
The operator $A$  is self--adjoint negative. For any $\alpha\in\R$, $(-A)^\alpha$ denotes the 
$\alpha$ power of the operator $-A$ and   $|\cdot|_\alpha$ is the norm of $D((-A)^{\alpha/2})$ which is equivalent to the norm of the Sobolev space $H^\alpha(0,1)$. We have
$|\cdot|_0=|\cdot|=|\cdot|_{L^2}$. We shall use the interpolatory estimate
\begin{equation}
\label{e1a}
|x|_\beta\le |x|^{\frac{\gamma-\beta}{\gamma-\alpha}}_\alpha\;|x|^{\frac{\beta-\alpha}{\gamma-\alpha}}_\gamma,\quad \alpha<\beta<\gamma,
\end{equation}
and the Agmon's inequality
\begin{equation}
\label{e2a}
|x|_{L^\infty}\le |x|^{\frac12}\,|x|^{\frac12}_1.
\end{equation}

\subsection*{Acknowledgement} A. Debussche is partially supported by the french government thanks to the
ANR program Stosymap. He also benefits from the support of the french government ``Investissements d'Avenir" program ANR-11-LABX-0020-01. 

\section{Integration by part formula for $\nu$} 
 \label{s2}
 
  This section is devoted to some consequences of Theorem \ref{t1}. Here we take $p=2$ for simplicity. In this case \eqref{e2d} can be rewritten as
\begin{equation}
\label{e2dd}
\int _H ((-A)^{-\alpha}D_x\varphi(x)\cdot h) \, \nu(dx) \le C\|\varphi\|_{L^2(H,\nu)} \,|h|,\quad\forall\;h\in H,
\end{equation}
where $\alpha=\frac{1+\delta}2$.
\begin{Proposition}
\label{p1e}
Let $\alpha >\frac12$, then for any $h\in H$  the linear operator
$$
\varphi\in C^1_b(H)\mapsto ((-A)^{-\alpha} D_x\varphi(x)\cdot h) \in C_b(H)$$
is closable in $L^2(H,\nu).$
\end{Proposition}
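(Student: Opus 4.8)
The plan is to derive the closability from an a priori ``integration by parts'' estimate obtained by applying Theorem~\ref{t1}, in the form \eqref{e2dd}, to products of test functions. Write $T\varphi:=(-A)^{-\alpha}D_x\varphi\cdot h$; since $(-A)^{-\alpha}$ is self-adjoint on $H$ this is $T\varphi=\langle D_x\varphi,k\rangle$ with the fixed vector $k:=(-A)^{-\alpha}h\in H$, so that $T\varphi\in C_b(H)$ whenever $\varphi\in C^1_b(H)$. Because $\alpha>\tfrac12$ we may set $\delta:=2\alpha-1>0$, and then Theorem~\ref{t1}, hence the bound \eqref{e2dd} for this value of $\alpha$, is at our disposal. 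Finally, $\nu$ being a probability measure, $C^1_b(H)$ embeds continuously into $L^2(H,\nu)$, and it is dense there since $C^1_b(H)\supset\mathcal E_A(H)$, which is a core for $\mathcal L$; thus $T$ is a densely defined operator in $L^2(H,\nu)$ with domain $C^1_b(H)$.

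The key step is the estimate: there is $C>0$ (depending on $h$ and $\alpha$) such that
\begin{equation}
\label{e-ibp}
\Big|\int_H \psi\,(T\varphi)\,d\nu\Big|\le C\Big(\|\psi\|_0+\sup_{x\in H}|D_x\psi(x)|\Big)\,\|\varphi\|_{L^2(H,\nu)},\qquad\forall\,\varphi,\psi\in C^1_b(H).
\end{equation}
To obtain it, note that $T$ is a derivation, $T(\varphi\psi)=\psi\,T\varphi+\varphi\,T\psi$, and that $\varphi\psi\in C^1_b(H)\subset\mathcal B_b(H)$; applying \eqref{e2dd} to $\varphi\psi$ and, by linearity in the first argument, to $-\varphi\psi$, one gets
\[
\Big|\int_H \psi\,(T\varphi)\,d\nu+\int_H \varphi\,(T\psi)\,d\nu\Big|\le C\,\|\varphi\psi\|_{L^2(H,\nu)}\,|h|\le C\,\|\psi\|_0\,\|\varphi\|_{L^2(H,\nu)}\,|h|.
\]
Since $|T\psi(x)|=|\langle D_x\psi(x),k\rangle|\le|k|\,\sup_{x}|D_x\psi(x)|$, Cauchy--Schwarz gives $\big|\int_H\varphi\,(T\psi)\,d\nu\big|\le|k|\,\|\varphi\|_{L^2(H,\nu)}\,\sup_x|D_x\psi(x)|$, and the triangle inequality yields \eqref{e-ibp}, the fixed factors $|h|$ and $|k|$ being absorbed into $C$.

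It then remains to conclude in the standard way. Let $\varphi_n\in C^1_b(H)$ with $\varphi_n\to0$ in $L^2(H,\nu)$ and $T\varphi_n\to g$ in $L^2(H,\nu)$; we must show $g=0$. For every $\psi\in C^1_b(H)$, passing to the limit in \eqref{e-ibp} gives
\[
\Big|\int_H \psi\,g\,d\nu\Big|=\lim_{n\to\infty}\Big|\int_H \psi\,(T\varphi_n)\,d\nu\Big|\le C\Big(\|\psi\|_0+\sup_x|D_x\psi(x)|\Big)\,\lim_{n\to\infty}\|\varphi_n\|_{L^2(H,\nu)}=0 ,
\]
so $\int_H\psi\,g\,d\nu=0$ for all $\psi\in C^1_b(H)$. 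By density of $C^1_b(H)$ in $L^2(H,\nu)$ this forces $g=0$, i.e.\ $T$ is closable.

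The argument is short because it rests entirely on Theorem~\ref{t1}, so the only genuine delicacies are bookkeeping. The hypothesis $\alpha>\tfrac12$ enters exactly once, to ensure $\delta=2\alpha-1>0$ so that \eqref{e2dd} applies; this is also why a strengthening of Theorem~\ref{t1} to $\delta=0$ would at once give the statement for $\alpha\ge\tfrac12$. One should additionally keep in mind that the product of two elements of $C^1_b(H)$ again belongs to $C^1_b(H)\subset\mathcal B_b(H)$, which is what legitimises both the Leibniz identity and the application of \eqref{e2dd} to $\varphi\psi$.
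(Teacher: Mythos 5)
Your proof is correct and follows essentially the same route as the paper: apply Theorem~\ref{t1} in the form \eqref{e2dd} to the product $\varphi\psi$ via the Leibniz rule, then let the sequence $(\varphi_n)$ pass to the limit and conclude $g=0$ by testing against all $\psi\in C^1_b(H)$. The only difference is cosmetic -- you package the limit argument as an a priori estimate \eqref{e-ibp} with the $\sup_x|D_x\psi|$ term, whereas the paper bounds the term $\int_H\varphi_n\,((-A)^{-\alpha}D_x\psi\cdot h)\,d\nu$ directly and sends it to zero.
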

\begin{proof}
Let $(\varphi_n)\subset   C^1_b(H)$ and $f\in L^2(H,\nu)$ such that
$$
\varphi_n\to 0\;\;\mbox{\rm in}\;L^2(H,\nu),\quad 
((-A)^{-\alpha} D_x\varphi_n(x)\cdot h)\to f\;\;\mbox{\rm in}\;L^2(H,\nu).
$$
Let $\psi\in C^1_b(H)$,
then by \eqref{e2dd} it follows that
$$
\begin{array}{l}
\ds\left|\int_H   [\psi(x)((-A)^{-\alpha}D_x\varphi_n(x) \cdot h)+\varphi_n(x)((-A)^{-\alpha}D_x\psi(x)\cdot h)]\,\nu(dx)\right|\\
\\
\ds \le   \|\varphi_n\psi\|_{L^2(H,\nu)}\,|h|_H
\le  \|\psi\|_0\;  \|\varphi_n\|_{L^2(H,\nu)}\,|h|_H.
\end{array}
$$
Letting $n\to\infty$, yields
$$
 \int_H   \psi(x) f(x)\,\nu(dx)=0,
$$
which yields $f=0$ by the arbitrariness of $\psi$.
\end{proof}
We can now define the Sobolev space $W^{1,2}_\alpha(H,\nu).$ First we improve Proposition \ref{p1e}.
\begin{Corollary}
\label{c2e}
Let $\alpha >\frac12$, then  the
 linear operator
$$
\varphi\in C^1_b(H)\mapsto (-A)^{-\alpha} D_x\varphi\in C_b(H;H)$$
is closable in $L^2(H,\nu).$
\end{Corollary}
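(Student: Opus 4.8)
The plan is to deduce the $H$-valued closability from the scalar case already settled in Proposition \ref{p1e} by decomposing along the orthonormal basis $(e_k)$. So suppose $(\varphi_n)\subset C^1_b(H)$ satisfies $\varphi_n\to 0$ in $L^2(H,\nu)$ and $(-A)^{-\alpha}D_x\varphi_n\to F$ in $L^2(H,\nu;H)$ for some $F\in L^2(H,\nu;H)$; the goal is to show $F=0$ $\nu$-a.e. (Implicitly one also notes that the operator is well defined as stated: for $\alpha>0$ the operator $(-A)^{-\alpha}$ is bounded on $H$, hence $x\mapsto (-A)^{-\alpha}D_x\varphi(x)$ maps $C^1_b(H)$ into $C_b(H;H)$, and $C^1_b(H)\subset L^2(H,\nu)$, $C_b(H;H)\subset L^2(H,\nu;H)$ because $\nu$ is a probability measure.)

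First I would pass to Fourier coefficients. Since $|\langle G,e_k\rangle|\le |G|_H$ pointwise on $H$, convergence in $L^2(H,\nu;H)$ forces, for every fixed $k\in\N$,
\[
\langle (-A)^{-\alpha}D_x\varphi_n,e_k\rangle \longrightarrow \langle F,e_k\rangle \quad\text{in }L^2(H,\nu).
\]
Using that $(-A)^{-\alpha}$ is self-adjoint with $(-A)^{-\alpha}e_k=\alpha_k^{-\alpha}e_k$, the left-hand side equals $((-A)^{-\alpha}D_x\varphi_n\cdot e_k)=\alpha_k^{-\alpha}D_k\varphi_n$, which is precisely the image of $\varphi_n$ under the scalar operator of Proposition \ref{p1e} with the choice $h=e_k$.

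Then I would invoke Proposition \ref{p1e} with $h=e_k$: that operator is closable in $L^2(H,\nu)$, and since $\varphi_n\to 0$ in $L^2(H,\nu)$ while its images converge to $\langle F,e_k\rangle$, closability forces $\langle F,e_k\rangle=0$ $\nu$-a.e. As this holds for every $k\in\N$ and $(e_k)$ is an orthonormal basis of $H$, we conclude $F=0$ $\nu$-a.e., which is exactly the asserted closability.

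I do not expect a real obstacle here; the only point needing a little care is the reduction from ``closable in each coordinate direction'' to ``closable as an $H$-valued operator'', which goes through precisely because the countable family of functionals $\{\langle\cdot,e_k\rangle\}_{k\in\N}$ separates points of $H$ and each coordinate component inherits closability from Proposition \ref{p1e}. Once the Sobolev space $W^{1,2}_\alpha(H,\nu)$ is defined as the domain of this closed operator, Theorem \ref{t1} (in the form \eqref{e2dd}) says that the embedding $W^{1,2}_\alpha(H,\nu)\hookrightarrow L^2(H,\nu)$ comes with the integration-by-parts bound, which is what is used in the sequel.
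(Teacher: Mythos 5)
Your argument is correct and is essentially the paper's own proof: both reduce to the scalar case by applying Proposition \ref{p1e} with $h=e_k$, noting that convergence in $L^2(H,\nu;H)$ gives coordinatewise convergence $\alpha_k^{-\alpha}D_k\varphi_n\to\langle F,e_k\rangle$ in $L^2(H,\nu)$, so closability in each direction forces $\langle F,e_k\rangle=0$ for all $k$ and hence $F=0$.
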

\begin{proof}
By Proposition \ref{p1e} taking $h=e_k$ we see that $D_k
$ is a closed operator on $L^2(H,\nu)$ for any $k\in \N$. Set
$$
(-A)^{-\alpha} D_x\varphi(x)=\sum_{k=1}^\infty \alpha_k^{-\alpha}D_k\varphi(x)\ e_k,\quad \forall\;h\in H,
$$
the series being convergent in $L^2(H,\nu)$.
Then
$$
|(-A)^{-\alpha} D_x\varphi(x)|^2=\sum_{k=1}^\infty \alpha_k^{-2\alpha}|D_k\varphi(x)|^2
$$
  Let $(\varphi_n)\subset   C^1_b(H)$ and $F\in L^2(H,\nu;H)$ such that
$$
\varphi_n\to 0\;\;\mbox{\rm in}\;L^2(H,\nu),\quad 
(-A)^{-\alpha} D_x\varphi \to F\;\;\mbox{\rm in}\;L^2(H,\nu;H).
$$
We have to show that $F=0$.

Now for any $k\in\N$ we have $D_k\varphi_n(x)\to \alpha_k^\alpha\langle F(x),e_k\rangle$ in $L^2(H,\nu)$. So, 
 $\langle F,e_k\rangle=0$ and the conclusion follows.
 
\end{proof}
Let us   denote by $W^{1,2}_\alpha(H,\nu)$ the domain of the closure of $(-A)^{-\alpha} D_x$.  Then if  $M^*$ denotes the adjoint of $(-A)^{-\alpha} D_x$ we have
 \begin{equation}
\label{e11}
\int_H ((-A)^{-\alpha} D_x\varphi(x)\cdot F(x))\,\nu(dx)=\int_H \varphi(x)\,M^*(F)(x)\,\nu(dx).
\end{equation}
Set now $F_h(x)=h$ where $h\in H$.
By Theorem 1  $F_h$ belongs  to the domain of $M^*$. Setting $M^*(F_h)=v_h$
we obtain the following integration by part formula.
\begin{Proposition}
Let $\alpha >\frac12$, then for any $h\in H$ there exists a function $v_h\in L^2(H,\nu)$ such that
 \begin{equation}
\label{e12}
\int_H ((-A)^{-\alpha} D_x\varphi(x) \cdot h)\,\nu(dx)=\int_H \varphi(x)\,v_h(x)\,\nu(dx),
\end{equation}
for any $\varphi\in W^{1,2}_\alpha(H,\nu)$.
\end{Proposition}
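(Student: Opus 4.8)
\medskip
\noindent\textbf{Proof proposal.} The plan is to read \eqref{e12} as the defining identity of the adjoint $M^*$ of the closed operator $M:=(-A)^{-\alpha}D_x$, whose domain is $W^{1,2}_\alpha(H,\nu)$, evaluated at the single element $F_h\in L^2(H,\nu;H)$ given by the constant vector field $F_h(x)\equiv h$. Thus the whole statement reduces to showing that $F_h$ belongs to the domain of $M^*$: once this is known, $v_h:=M^*(F_h)$ is by construction an element of $L^2(H,\nu)$, and \eqref{e12} is exactly \eqref{e11} specialised to $F=F_h$.

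The key step is the verification that $F_h\in D(M^*)$, and the analytic input for it is Theorem \ref{t1}. I would use the abstract criterion: an element $G\in L^2(H,\nu;H)$ lies in $D(M^*)$ if and only if the linear map $\varphi\mapsto\int_H(M\varphi(x),G(x))\,\nu(dx)$ is continuous on $D(M)=W^{1,2}_\alpha(H,\nu)$ for the norm of $L^2(H,\nu)$, in which case $M^*G$ is the Riesz representative of its bounded extension to $L^2(H,\nu)$. Apply this to $G=F_h$. For $\varphi\in C^1_b(H)$, self-adjointness of $(-A)^{-\alpha}$ gives $\int_H((-A)^{-\alpha}D_x\varphi(x)\cdot h)\,\nu(dx)=\int_H(D_x\varphi(x)\cdot(-A)^{-\alpha}h)\,\nu(dx)$; since $\alpha=\tfrac{1+\delta}{2}$ one has $(-A)^{-\alpha}h\in H^{1+\delta}(0,1)$ with $|(-A)^{-\alpha}h|_{1+\delta}=|h|$, so Theorem \ref{t1} with $p=2$ — equivalently the estimate \eqref{e2dd} — yields $\left|\int_H((-A)^{-\alpha}D_x\varphi(x)\cdot h)\,\nu(dx)\right|\le C\|\varphi\|_{L^2(H,\nu)}\,|h|$.

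It then remains to transport this bound from $C^1_b(H)$ to all of $W^{1,2}_\alpha(H,\nu)$. Given $\varphi\in W^{1,2}_\alpha(H,\nu)$ and $\varphi_n\in C^1_b(H)$ with $\varphi_n\to\varphi$ in $L^2(H,\nu)$ and $M\varphi_n\to M\varphi$ in $L^2(H,\nu;H)$ — possible since, by Proposition \ref{p1e} and Corollary \ref{c2e}, $W^{1,2}_\alpha(H,\nu)$ is the graph-norm closure of $C^1_b(H)$ — the fact that $F_h$ is a fixed element of $L^2(H,\nu;H)$ allows one to pass to the limit both in the pairing $\int_H(M\varphi_n,F_h)\,d\nu$ and in the bound $C\|\varphi_n\|_{L^2(H,\nu)}|h|$, so the estimate holds for every $\varphi\in W^{1,2}_\alpha(H,\nu)$. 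Hence $F_h\in D(M^*)$, $v_h:=M^*(F_h)\in L^2(H,\nu)$ satisfies $\|v_h\|_{L^2(H,\nu)}\le C|h|$, and \eqref{e12} follows from \eqref{e11}. I do not expect a genuine obstacle here: all the analytic content sits in Theorem \ref{t1}, and the rest is the routine duality between a densely defined closed operator and its adjoint; the only point deserving a line of care is the density argument that carries the a priori bound from $C^1_b(H)$ to the abstract Sobolev space $W^{1,2}_\alpha(H,\nu)$.
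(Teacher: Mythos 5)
Your proposal is correct and is essentially the paper's own argument: the paper also defines $M^*$ as the adjoint of the closed operator $(-A)^{-\alpha}D_x$ (via \eqref{e11}), invokes Theorem \ref{t1} (in the form \eqref{e2dd}) to conclude that the constant field $F_h$ lies in $D(M^*)$, and sets $v_h=M^*(F_h)$. The only difference is that you spell out the duality criterion and the graph-norm density step carrying the bound from $C^1_b(H)$ to $W^{1,2}_\alpha(H,\nu)$, which the paper leaves implicit.
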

 By \eqref{e12} it follows that the measure $\nu$ possesses the  Fomin derivative in all directions $(-A)^{-\alpha}h$ for $h\in H$, see e.g. \cite{Pu98}.

If, in \eqref{e1}, $b=0$ then the gaussian measure $\mu=N_Q$,  where $Q=-\frac12\;A^{-1}$, is the
invariant measure  and  
$
v_h(x)=\sqrt 2\langle Q^{-1/2}x,h\rangle$.
Then \eqref{e12} reduces to the usual integration by parts formula for the Gaussian measure $\mu$. 
Note that it follows that, as already mentionned, Theorem \ref{t1} is true with $\delta=0$ in this case.

 We recall the importance of   formula \eqref{e12}  for different topics as Malliavin calculus \cite{Ma97}, definition of integral on infinite dimensional surfaces of $H$ \cite{AiMa88},  \cite{FePr92}, \cite{Bo98},  definition   of BV functions in abstract Wiener spaces \cite{AmMiMaPa10}, infinite dimensional generalization of DiPerna-Lions  theory \cite{AmFi09}, \cite{DaFlRo14} and so on.

We think that Theorem \ref{t1} open the possibility to study these topics in the more general situations of non  Gaussian measures.\bigskip

 \section{Proof of Theorem \ref{t1}}
\label{s3}

  For $h\in H$, $\eta^h(t,x)$ is the differential of $X(t,x)$ in  the direction $h$ and $(\eta^h(t,x))_{\ge 0}$ satisfies the equation
 \begin{equation}
\label{e2}
\left\{\begin{array}{l}
\ds\frac{d\eta^h(t,x)}{dt}=A\eta^h(t,x)+b'(X(t,x))\eta^h(t,x),\\
\\
\eta^h(0,x)=h.
\end{array}\right. 
\end{equation}
Note that this equation as well as the computations below are done at a formal level. They could easily 
be justified rigorously by an approximation argument, such as Galerkin approximation for instance.
The following result is proved in \cite{DaDe07}, see Proposition 2.2.
\begin{Lemma}
\label{l1}
For any $\alpha\in[-1,0]$ there exists $c=c(\alpha)>0$ such that
\begin{equation}
\label{e3}
 e^{-c\int_0^t|X(s,x)|_{L^4}^{\frac83}\,ds}\,|\eta^h(s,x)|^2_\alpha+\int_0^t
 e^{-c\int_s^t|X(\tau,x)|_{L^4}^{\frac83}\,d\tau}\,|\eta^h(s,x)|^2_{1+\alpha}\,ds\le |h|^2_\alpha.
\end{equation}
\end{Lemma}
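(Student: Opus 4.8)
The plan is to derive an energy inequality for $\eta^h(t,x)$ in the norm $|\cdot|_\alpha$ directly from the linearized equation \eqref{e2}, and then to close it by a Gronwall-type argument. As noted just after \eqref{e2}, all the manipulations below are carried out on a Galerkin approximation, so that every quantity is finite and the final bound passes to the limit.

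First I would take the $H$-inner product of \eqref{e2} with $(-A)^\alpha\eta^h$. Writing $\eta=\eta^h(t,x)$ and using that $A$ is self-adjoint, this gives
\begin{equation*}
\frac12\frac{d}{dt}|\eta|_\alpha^2 = -|\eta|_{1+\alpha}^2 + \langle (-A)^\alpha\eta,\, b'(X)\eta\rangle,
\end{equation*}
the dissipative term $-|\eta|_{1+\alpha}^2$ coming from $\langle(-A)^\alpha\eta,A\eta\rangle=-|(-A)^{(1+\alpha)/2}\eta|^2$. The whole difficulty is concentrated in the nonlinear term, which I rewrite using $b(x)=\partial_\xi(x^2)$, i.e. $b'(X)\eta=2\partial_\xi(X\eta)$.

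The core step is to bound $|\langle(-A)^\alpha\eta,\,2\partial_\xi(X\eta)\rangle|$ by $\tfrac12|\eta|_{1+\alpha}^2 + c\,|X|_{L^4}^{8/3}|\eta|_\alpha^2$. I would split $(-A)^\alpha=(-A)^{(\alpha-1)/2}(-A)^{(1+\alpha)/2}$ and move the first factor onto $\partial_\xi(X\eta)$ by self-adjointness, so that Cauchy--Schwarz produces $|\eta|_{1+\alpha}$ times $|(-A)^{(\alpha-1)/2}\partial_\xi(X\eta)|$. Since $(-A)^{(\alpha-1)/2}\partial_\xi$ has order $\alpha\le 0$, this reduces matters to controlling the product $X\eta$ in the norm $|\cdot|_\alpha$, i.e. $|(-A)^{(\alpha-1)/2}\partial_\xi(X\eta)|\le C|X\eta|_\alpha$. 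For $\alpha=0$ this is just H\"older, $|X\eta|\le |X|_{L^4}|\eta|_{L^4}$, combined with the one-dimensional bound $|\eta|_{L^4}\le C|\eta|^{3/4}|\eta|_1^{1/4}$ that follows from Agmon's inequality \eqref{e2a}; this yields $C|X|_{L^4}|\eta|^{3/4}|\eta|_1^{5/4}$, and Young's inequality with conjugate exponents $8/5$ and $8/3$ splits it exactly into $\tfrac12|\eta|_1^2+c|X|_{L^4}^{8/3}|\eta|^2$. This is precisely where the exponent $8/3$ in \eqref{e3} originates. For general $\alpha\in[-1,0]$ I would establish the analogous fractional product estimate $|X\eta|_\alpha\le C|X|_{L^4}|\eta|_\alpha^{3/4}|\eta|_{1+\alpha}^{1/4}$, using \eqref{e1a} to interpolate, and conclude by the same Young step.

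Combining these bounds and absorbing $\tfrac12|\eta|_{1+\alpha}^2$ into the dissipation gives the differential inequality $\frac{d}{dt}|\eta|_\alpha^2+|\eta|_{1+\alpha}^2\le c\,|X|_{L^4}^{8/3}|\eta|_\alpha^2$. Multiplying by the integrating factor $e^{-c\int_0^t|X(\tau)|_{L^4}^{8/3}d\tau}$ cancels the right-hand side and leaves $\frac{d}{dt}\big(e^{-c\int_0^t|X|_{L^4}^{8/3}}|\eta|_\alpha^2\big)\le -e^{-c\int_0^t|X|_{L^4}^{8/3}}|\eta|_{1+\alpha}^2$; integrating in time from $0$ to $t$ and using $\eta(0)=h$ then yields \eqref{e3}. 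The step I expect to be the main obstacle is the nonlinear estimate for $\alpha<0$: unlike the clean $\alpha=0$ computation, bounding the Burgers product $X\eta$ in a negative-order Sobolev norm while keeping the right-hand side linear in $|X|_{L^4}^{8/3}$ requires a careful fractional product/commutator estimate, and it is there that the argument is most delicate.
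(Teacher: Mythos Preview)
The paper does not actually prove this lemma; it simply quotes Proposition~2.2 of \cite{DaDe07}. Your sketch is precisely the standard energy-method argument one expects to find there: pair \eqref{e2} with $(-A)^\alpha\eta^h$, estimate the Burgers trilinear term so that half of the dissipation can be absorbed, and close with the integrating-factor/Gronwall step. In particular your $\alpha=0$ computation, including the appearance of the exponent $8/3$ via Young with conjugate exponents $8/5$ and $8/3$, is exactly the standard one, and your final Gronwall argument is correct.

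You also correctly isolate the only delicate point, but I would flag that the specific bilinear estimate you propose for $\alpha<0$, namely $|X\eta|_\alpha\le C|X|_{L^4}\,|\eta|_\alpha^{3/4}|\eta|_{1+\alpha}^{1/4}$, does \emph{not} hold uniformly down to $\alpha=-1$: by duality it would require multiplication by an $L^4$ function to map $H^1$ into $H^{3/4}$, which fails because $3/4$ fractional derivatives of $X\eta$ cannot be controlled by $|X|_{L^4}$ alone. The way through is to estimate the trilinear form $\langle(-A)^\alpha\eta,\partial_\xi(X\eta)\rangle$ directly rather than via a product bound on $|X\eta|_\alpha$, placing the derivatives differently for $\alpha$ near $-1$; this is a technical refinement of the step you already flagged as the main obstacle, not a change of overall strategy.
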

We introduce the following Feynman-Kac  semigroup
$$
S_t\varphi(x)=\E \left[\varphi(X(t,x))e^{-K\int_0^t|X(s,x)|^4_{L^4}\,ds}   \right]
$$
Next lemma is a slight generalization of  Lemma 3.2 in \cite{DaDe07}.
\begin{Lemma}
\label{l2}
For any $\alpha\in[0,1]$ and $p>1$, if $K$ is chosen large enough then for any $\varphi$ Borel and bounded
we have
\begin{equation}
\label{e4}
 |D_xS_t\varphi(x)|_{\alpha}\le c\, e^{ct}(1+t^{-\frac{1+\alpha}{2}})(1+|x|_{L^6}^3)\,\left[\E\left( \varphi^p(X(t,x)\right)\right]^{1/p},
\end{equation}
where $c$ depends on $p,\, K,\, \alpha$.
\end{Lemma}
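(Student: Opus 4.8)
\emph{Proof strategy.} The plan is to follow the Bismut--Elworthy--Li scheme, adapted to the Feynman--Kac weight $\Gamma_t:=e^{-K\int_0^t|X(s,x)|_{L^4}^4\,ds}$. All the computations below would be carried out first on Galerkin truncations of \eqref{e1} --- where $b$ is smooth and bounded, so that the variational equation \eqref{e2}, the Malliavin calculus, and the integration by parts are classical --- with all constants uniform in the dimension, and one then passes to the limit; finally a general bounded Borel $\varphi$ is reached by approximation with smooth functions, using that the right-hand side of the representation formula below is continuous under bounded pointwise convergence of $\varphi$. The first step is to differentiate $S_t\varphi$ with respect to $x$ in a direction $h$; since $D_x(|X(s,x)|_{L^4}^4)\cdot h=4\langle X(s,x)^3,\eta^h(s,x)\rangle$ (the cube being pointwise), this gives
\begin{equation*}
D_xS_t\varphi(x)\cdot h=\E\big[D_x\varphi(X(t,x))\cdot\eta^h(t,x)\,\Gamma_t\big]-4K\,\E\Big[\varphi(X(t,x))\,\Gamma_t\int_0^t\langle X(s,x)^3,\eta^h(s,x)\rangle\,ds\Big].
\end{equation*}
To remove $D_x\varphi$ I would use that the noise is cylindrical, hence non-degenerate: the duality between the Malliavin derivative and the It\^o integral, applied to $F=\varphi(X(t,x))\,\Gamma_t$ with the adapted control $g(s)=\tfrac1t\,\eta^h(s,x)$, produces --- recalling $\eta^h(s,x)=U(s,0)h$ for the random evolution operator $U(t,s)$ of $\dot\psi=A\psi+b'(X)\psi$, so that the associated variation satisfies $\zeta^g(t)=\eta^h(t,x)$ and $\zeta^g(s)=\tfrac st\,\eta^h(s,x)$, and that the Malliavin derivative of $F$ also sees the variation of $\Gamma_t$ --- the representation
\begin{equation*}
D_xS_t\varphi(x)\cdot h=\frac1t\,\E\Big[\varphi(X(t,x))\,\Gamma_t\int_0^t\langle\eta^h(s,x),dW(s)\rangle\Big]-4K\,\E\Big[\varphi(X(t,x))\,\Gamma_t\int_0^t\Big(1-\tfrac st\Big)\langle X(s,x)^3,\eta^h(s,x)\rangle\,ds\Big],
\end{equation*}
which no longer involves $D_x\varphi$.

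It then remains to estimate the two terms, using $|D_xS_t\varphi(x)|_\alpha=\sup_{|h|_{-\alpha}\le1}|D_xS_t\varphi(x)\cdot h|$; I fix such an $h$. For the stochastic-integral term, H\"older's inequality in $\omega$ (exponents $p,p'$, keeping all of $\Gamma_t$ with $M_t:=\int_0^t\langle\eta^h(s,x),dW(s)\rangle$) reduces matters to $\E[|\Gamma_tM_t|^{p'}]$. Since $\Gamma_t$ is $\mathcal F_t$-measurable it cannot be moved inside $M_t$, so I would first integrate by parts against the decreasing process $\Gamma$: writing $N_t:=\int_0^t\Gamma_s\langle\eta^h(s,x),dW(s)\rangle$ one gets $\Gamma_tM_t=N_t-K\,\Gamma_t\int_0^t\Gamma_s^{-1}|X(s,x)|_{L^4}^4\,N_s\,ds$, and since $\Gamma_t\int_0^t\Gamma_s^{-1}|X(s,x)|_{L^4}^4\,ds=\tfrac1K(1-\Gamma_t)\le\tfrac1K$ this gives $|\Gamma_tM_t|\le2\sup_{s\le t}|N_s|$; Burkholder--Davis--Gundy then leaves $\E[(\int_0^t\Gamma_s^2|\eta^h(s,x)|^2\,ds)^{p'/2}]$. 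Now interpolating $|\eta^h(s,x)|\le|\eta^h(s,x)|_{-\alpha}^{1-\alpha}|\eta^h(s,x)|_{1-\alpha}^{\alpha}$ via \eqref{e1a}, using Lemma \ref{l1} with $\alpha$ replaced by $-\alpha$ --- in the two forms, both consequences of the energy estimate behind it, $|\eta^h(s,x)|^2_{-\alpha}\le e^{c\int_0^s|X(\tau,x)|_{L^4}^{8/3}d\tau}|h|^2_{-\alpha}$ and $\int_0^t e^{-c\int_0^s|X(\tau,x)|_{L^4}^{8/3}d\tau}|\eta^h(s,x)|^2_{1-\alpha}\,ds\le|h|^2_{-\alpha}$ --- and choosing $K$ large so that, by $|x|_{L^4}^{8/3}\le1+|x|_{L^4}^4$, the weight $\Gamma$ absorbs every exponential of $\int|X|_{L^4}^{8/3}$, one obtains $\int_0^t\Gamma_s^2|\eta^h(s,x)|^2\,ds\le c\,e^{ct}\,t^{1-\alpha}|h|^2_{-\alpha}$, so that the prefactor $\tfrac1t$ yields exactly the rate $t^{-(1+\alpha)/2}$ (and no $|x|$-dependence). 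For the second term one bounds $|\langle X(s,x)^3,\eta^h(s,x)\rangle|\le|X(s,x)^3|_{\alpha-1}|\eta^h(s,x)|_{1-\alpha}\le c|X(s,x)|_{L^6}^3|\eta^h(s,x)|_{1-\alpha}$ (the last step by the Sobolev embedding of $H^{1-\alpha}(0,1)$), applies Cauchy--Schwarz in $s$, Lemma \ref{l1} (whose exponential is again absorbed by $\Gamma_t$) and H\"older in $\omega$, and controls the residual $\E[(\int_0^t|X(s,x)|_{L^6}^6\,ds)^{p'/2}]^{1/p'}$ by $c\,e^{ct}(1+|x|_{L^6}^3)$ through the standard $L^p$ a priori estimates for \eqref{e1} (writing $X=Z+Y$ with $Z$ the stochastic convolution, cf. \cite{DaDeTe94}); this term carries no negative power of $t$. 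Collecting the two bounds gives \eqref{e4}.

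The step I expect to be the main obstacle is the stochastic-integral estimate. The Feynman--Kac weight $\Gamma_t$ sits \emph{outside} the stochastic integral, and the ``forward'' weights $e^{-c\int_s^t|X|_{L^4}^{8/3}}$ of Lemma \ref{l1} as stated are not directly compatible with it; the way around is the integration-by-parts identity above, which reintroduces $\Gamma$ as the ``backward'' factor $\Gamma_s$ inside the martingale $N$, together with a careful choice of $K$ making every exponential of $\int|X|_{L^4}^{8/3}$ integrable. Obtaining the sharp power $t^{-(1+\alpha)/2}$ --- rather than $t^{-1}$ --- also relies on combining the pointwise and the integrated bounds for $\eta^h$ through the above interpolation. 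The remaining points --- the Galerkin justification of the formal differentiation and of the integration by parts, and the passage to Borel $\varphi$ --- are routine.
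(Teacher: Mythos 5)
Your proposal is correct and follows essentially the same route as the paper: the same Feynman--Kac/Bismut--Elworthy--Li representation splitting $D_xS_t\varphi(x)\cdot h$ into the stochastic-integral term and the $4K$-term, then interpolation \eqref{e1a} combined with Lemma \ref{l1} (with $K$ large so that the weight absorbs the exponentials via $|x|_{L^4}^{8/3}\le 1+|x|_{L^4}^4$) to produce the rate $t^{-\frac{1+\alpha}{2}}$, and the a priori $L^6$ moment bounds for the second term. The only differences are local and harmless: your representation carries the factor $\left(1-\tfrac st\right)\in[0,1]$ (the paper quotes the formula from \cite{DaDe07} without it), and you control $\Gamma_tM_t$ by the pathwise integration by parts giving $|\Gamma_tM_t|\le 2\sup_{s\le t}|N_s|$ followed by Burkholder--Davis--Gundy, whereas the paper applies It\^o's formula to $\Gamma_t^q|M_t|^q$ and absorbs $\E\bigl(\sup_r|z(r)|^q\bigr)$ by Young's inequality --- both routes reduce to estimating the same quantity $\E\Bigl[\bigl(\int_0^t\Gamma_s^2|\eta^h(s,x)|^2\,ds\bigr)^{q/2}\Bigr]$ in exactly the same way.
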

\begin{proof} It is clearly suficient to prove the result for $p\le 2$. 
We proceed as in \cite{DaDe07} and write
$$
 D_xS_t\varphi(x)\cdot h = I_1+I_2.
$$
where
$$
I_1=\frac1t \E\left(e^{-K\int_0^t|X(s,x)|^4_{L^4}\,ds}\varphi(X(t,x)) \int_0^t (\eta^h(s,x),dW(s) )     \right)   
$$
and
$$
I_2=-4K \E\left(e^{-K\int_0^t|X(s,x)|^4_{L^4}\,ds}\varphi(X(t,x)) \int_0^t(X^3(s,x),\eta^h(s,x))ds     \right).
$$
For $I_1$ we have with $\frac1p+\frac1q=1$:
$$
\begin{array}{lll}
I_1&\le& \ds\frac1t\left[\E\left( \varphi^p(X(t,x)\right)\right]^{1/p} \left[\E\left(e^{-Kq \int_0^t|X(s,x)|^4_{L^4}\,ds}\left|\int_0^t (\eta^h(s,x),dW(s)\right|^q    \right)   \right]^{1/q}
\end{array} 
$$
Using It\^o's formula for $|z(t)|^q=e^{-Kq \int_0^t|X(s,x)|^4_{L^4}\,ds}\left|\int_0^t (\eta^h(s,x),dW(s))\right|^q  $, we get:
$$
\begin{array}{ll}
\ds |z(t)|^q& \ds =-4Kq \int_0^t |X(s,x)|^4_{L^4} |z(s)|^q\,ds\\
&\ds + q\int_0^t e^{-K \int_0^s|X(s,x)|^4_{L^4}\,ds}|z(s)|^{q-2} z(s) (\eta^h(s,x),dW(s))\\
&\ds + \frac12q(q-1)\int_0^t e^{-2K \int_0^t|X(s,x)|^4_{L^4}\,ds}|z(s)|^{q-2} |\eta^h(s,x)|^2ds.
\end{array}
$$
We deduce:
$$
\begin{array}{ll}
\ds\E\left(\sup_{r\in [0,t]} |z(r)|^q    \right)
&\ds \le  q\E\left(\sup_{r\in[0,t]} \left|\int_0^r e^{-K \int_0^s|X(s,x)|^4_{L^4}\,ds}|z(s)|^{q-2} z(s) (\eta^h(s,x),dW(s))\right|\right)\\
&\ds + \frac12q(q-1)\E\left(\int_0^t e^{-2K \int_0^t|X(s,x)|^4_{L^4}\,ds}|z(s)|^{q-2} |\eta^h(s,x)|^2ds
\right)\\
&=A_1+A_2.
\end{array}
$$
By a standard martingale inequality, \eqref{e1a} and Lemma \ref{l1}, we have
$$
\begin{array}{ll}
\ds A_1& \le \ds 3q \E\left(\left|\int_0^t e^{-2K \int_0^s|X(s,x)|^4_{L^4}\,ds}|z(s)|^{2(q-1)} |\eta^h(s,x)|^2ds\right|^{1/2}\right)\\
&\ds \le 3q \E\left(\sup_{r\in [0,t]} |z(r)|^{q-1}\left(\int_0^t e^{-2K \int_0^s|X(s,x)|^4_{L^4}\,ds} |\eta^h(s,x)|^2ds\right)^{1/2}\right)\\
&\le \ds 3q \E\left(\sup_{r\in [0,t]} |z(r)|^{q-1} \left(\int_0^te^{-2K\int_0^s|X(s,x)|^4_{L^4}\,ds}\,|\eta^h(s,x)|^{2(1-\alpha)}_{-\alpha}\,|\eta^h(s,x)|^{2\alpha}_{1-\alpha}\,ds \right)^{1/2}   \right)  \\
&\ds \le 3q t^{\frac{1-\alpha}2}\,|h|_{-\alpha}\E\left(\sup_{r\in [0,t]} |z(r)|^{q-1} \right)\\
&\ds \le 3q t^{\frac{1-\alpha}2}\,|h|_{-\alpha}\left[\E\left(\sup_{r\in [0,t]} |z(r)|^{q} \right)\right]^{(q-1)/q}
\\
&\ds \le \frac14 \E\left(\sup_{r\in [0,t]} |z(r)|^{q}\right) + c  t^{\frac{q(1-\alpha)}2}\,|h|_{-\alpha}^q.
\end{array} 
$$
Similarly:
$$
\begin{array}{ll}
\ds A_2& \le \ds \frac12q(q-1) \E\left(\sup_{r\in [0,t]} |z(r)|^{q-2}\int_0^t e^{-2K \int_0^s|X(s,x)|^4_{L^4}\,ds} |\eta^h(s,x)|^2ds\right)\\
&\le \ds \frac12q(q-1) \E\left(\sup_{r\in [0,t]} |z(r)|^{q-2} \int_0^te^{-2K\int_0^t|X(s,x)|^4_{L^4}\,ds}\,|\eta^h(s,x)|^{2(1-\alpha)}_{-\alpha}\,|\eta^h(s,x)|^{2\alpha}_{1-\alpha}\,ds    \right)  \\
&\ds  \le\frac12q(q-1) t^{1-\alpha}\,|h|_{-\alpha}^2\E\left(\sup_{r\in [0,t]} |z(r)|^{q-2} \right)\\
&\ds \le\frac12q(q-1) t^{1-\alpha}\,|h|_{-\alpha}^2\left[\E\left(\sup_{r\in [0,t]} |z(r)|^{q} \right)\right]^{(q-2)/q}\\
&\ds \le \frac14 \E\left(\sup_{r\in [0,t]} |z(r)|^{q}\right) + c  t^{\frac{q(1-\alpha)}2}\,|h|_{-\alpha}^q.
\end{array} 
$$
We deduce:
$$
I_1\le c t^{-\frac{1+\alpha}2}\,|h|_{-\alpha} \left[\E\left( \varphi^p(X(t,x)\right)\right]^{1/p} 
$$

For $I_2$ we write
$$
\begin{array}{lll}
I_2&=&4K \E\left(e^{-K\int_0^t|X(s,x)|^4_{L^4}\,ds}\varphi(X(t,x)) \int_0^t(X^3(s,x),\eta^h(s,x))ds     \right)\\
&\le& \ds 4K\left[\E\left( \varphi^p(X(t,x)\right)\right]^{1/p}\left[\E\left(e^{-Kq\int_0^t|X(s,x)|^4_{L^4}\,ds}\left(\int_0^t|X(s,x)|^3_{L^6} \,|\eta^h(s,x)|\,ds\right)^q \right)   \right]^{1/q}.
 \end{array} 
$$
By Lemma \ref{l1} and Proposition 2.2 in \cite{DaDe07}
$$
I_2\le c_q(1+|x|_{L^6}^3)\left[\E\left( \varphi^p(X(t,x)\right)\right]^{1/p}\,|h|_{-1}.
$$
Gathering the estimates on $I_1$ and $I_2$ gives the result.
\end{proof}

\begin{Lemma}
\label{l3}
For any $\alpha\in [0,1)$, $p>1$, $q>1$ satisfying $\frac1p+\frac1q<1$, if $K$ is chosen large enough  there exists a constants $c$ depending on $\alpha,\, p,\, q$ such that for any $\varphi$ Borel bounded
and $h\,:\, H\to D((-A)^{-\alpha/2})$ Borel such that $\int_H  |h(x)|_{-\alpha}^q \nu(dx)<\infty$
 we have
\begin{equation}
\label{e5}
\left|\int_H D_xP_t\varphi(x)\cdot h(x) \nu(dx) \right|\le c e^{ct} (1+t^{-\frac{1+\alpha}2})\|\varphi\|_{L^p(H,\nu)}\left(\int_H  |h(x)|_{-\alpha}^q \nu(dx)
\right)^{1/q}.
\end{equation}
\end{Lemma}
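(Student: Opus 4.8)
The plan is to reduce the estimate to the Feynman--Kac semigroup $S_t$ controlled in Lemma \ref{l2}, and then remove the exponential weight by paying a power of $(1+|x|_{L^6})$ that is integrable against $\nu$. First I would relate $D_xP_t\varphi$ to $D_xS_t$. Writing
$$
P_t\varphi(x)=\E\left[\varphi(X(t,x))e^{K\int_0^t|X(s,x)|^4_{L^4}\,ds}\,e^{-K\int_0^t|X(s,x)|^4_{L^4}\,ds}\right],
$$
one does not directly get $S_t$ applied to a bounded function, so instead I would differentiate $P_t\varphi$ directly using the same Bismut--Elworthy--Li type representation as in the proof of Lemma \ref{l2}, but \emph{without} the weight: this produces a Malliavin-type term $\frac1t\E(\varphi(X(t,x))\int_0^t(\eta^h(s,x),dW(s)))$ plus (since there is no weight) no analogue of $I_2$. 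The difficulty is that without the exponential weight the moments of $\eta^h$ and of $\int_0^t(\eta^h,dW)$ are not controlled uniformly in $x$. The trick, exactly as in \cite{DaDe07}, is to reinsert the weight by splitting $\varphi = \varphi \cdot e^{K\int_0^t|X|^4}\cdot e^{-K\int_0^t|X|^4}$ inside the stochastic integral term and estimating via H\"older with three exponents: one for $\varphi$ in $L^p$, one absorbing $e^{-Kq'\int_0^t|X|^4}\,|\int_0^t(\eta^h,dW)|^{q'}$ by the It\^o-formula computation of Lemma \ref{l2} (this is where the factor $t^{-(1+\alpha)/2}|h|_{-\alpha}$ and the constant $ce^{ct}$ come from), and one for the remaining factor $e^{Kr\int_0^t|X|^4}$, whose expectation is finite but depends on $x$ and $t$ in an exponential-of-polynomial way.

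Concretely I expect the pointwise bound
$$
|D_xP_t\varphi(x)\cdot h|\le c e^{ct}(1+t^{-\frac{1+\alpha}{2}})\,g_t(x)\,\bigl[\E\varphi^p(X(t,x))\bigr]^{1/p}\,|h|_{-\alpha},
$$
where $g_t(x)$ is a function with at most exponential growth in $|x|_{L^6}$ and polynomial growth in $t$ (coming from bounding $\E\,e^{Kr\int_0^t|X(s,x)|^4_{L^4}\,ds}$; the a priori estimates on $X$ in \cite{DaDe07} and the exponential integrability of the $L^4$-norm under the dynamics give such a bound, with the $e^{ct}$ already accounting for the $t$-dependence). Then, for $h=h(x)$ Borel, I would integrate this inequality against $\nu(dx)$, use the invariance of $\nu$ to rewrite $\int_H\E\varphi^p(X(t,x))\,\nu(dx)=\int_H\varphi^p\,d\nu=\|\varphi\|_{L^p(H,\nu)}^p$, and apply H\"older in $x$ with the exponents dictated by $\frac1p+\frac1q<1$: the slack $1-\frac1p-\frac1q>0$ is precisely the room needed to absorb $g_t(x)$ into an $L^r(H,\nu)$ norm, which is finite because $\nu$ has finite exponential moments of $|x|_{L^6}$ (indeed of $|x|_{L^4}$, and one controls $|x|_{L^6}$ via interpolation and Agmon \eqref{e2a}, or directly from the known integrability of $\nu$). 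This yields
$$
\left|\int_H D_xP_t\varphi(x)\cdot h(x)\,\nu(dx)\right|\le c e^{ct}(1+t^{-\frac{1+\alpha}{2}})\,\|\varphi\|_{L^p(H,\nu)}\left(\int_H|h(x)|_{-\alpha}^q\,\nu(dx)\right)^{1/q},
$$
where the $L^r$-norm of $g_t$ has been absorbed into the constant and the $e^{ct}$ factor.

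The main obstacle is the uniform-in-$x$ control of the weight-removal step: one must show $\E\,e^{Kr\int_0^t|X(s,x)|^4_{L^4}\,ds}$ grows no faster than exponentially in $|x|$ (in a norm for which $\nu$ has finite exponential moments) and polynomially in $t$. This is the only place where the structure of the Burgers nonlinearity and the dissipativity of $A$ really enter; once it is in hand, everything else is H\"older's inequality, the It\^o-formula martingale estimates already carried out in Lemma \ref{l2}, and the invariance of $\nu$. A secondary technical point is checking that $h(x)$ being merely Borel with $|h|_{-\alpha}\in L^q(H,\nu)$ is enough to justify the manipulations; this is handled by the usual approximation/monotone-class argument, since the pointwise bound above is linear in $h$ and $D_xP_t\varphi(x)$ is a well-defined element of $H$ (indeed of $D((-A)^{\alpha/2})$) for $\varphi$ bounded Borel by Lemma \ref{l2}.
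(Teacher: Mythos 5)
Your treatment of the Feynman--Kac semigroup is fine and matches the paper's first step: the pointwise bound of Lemma \ref{l2}, H\"older with three exponents exploiting the slack $\frac1p+\frac1q<1$, the polynomial moment $\int_H(1+|x|_{L^6}^3)^r\,\nu(dx)<\infty$, and the invariance of $\nu$ to turn $\int_H\E\bigl(\varphi^p(X(t,x))\bigr)\nu(dx)$ into $\|\varphi\|^p_{L^p(H,\nu)}$. The gap is in how you pass from $S_t$ to $P_t$. You propose to write the unweighted Bismut--Elworthy--Li term for $D_xP_t\varphi$ and then reinsert the damping factor by H\"older, paying a factor controlled by $\E\,e^{Kr\int_0^t|X(s,x)|^4_{L^4}\,ds}$, which you assert is finite with at most exponential growth in $|x|_{L^6}$, and you then invoke exponential moments of $|x|_{L^6}$ under $\nu$. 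Neither ingredient is available, and the first is in fact false: already in the linear case $b=0$, where the law of $X(s,x)$ is Gaussian and $|X(s,x)|_{L^4}$ has Gaussian tails, the variable $Y=\int_0^t|X(s,x)|^4_{L^4}\,ds$ has only stretched-exponential tails, $\P(Y\ge\lambda)\gtrsim e^{-c\sqrt{\lambda}}$, so that $\E\,e^{KY}\ge e^{K\lambda}\,\P(Y\ge\lambda)\to\infty$ for every $K>0$. This is exactly why the weighted semigroup $S_t$ was introduced in \cite{DaDe07}: the weight $e^{-K\int_0^t|X|^4_{L^4}\,ds}$ cannot be removed afterwards by an exponential-moment/H\"older argument. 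Moreover only polynomial moments of $\nu$ (Proposition 2.3 of \cite{DaDe07}) are at one's disposal, not exponential ones. So the pointwise bound you announce for $|D_xP_t\varphi(x)\cdot h|$, which is the core of your argument, cannot be established this way.

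The paper avoids this obstacle by a different device: the variation-of-constants identity $P_t\varphi=S_t\varphi+K\int_0^tS_{t-s}\bigl(|\cdot|^4_{L^4}\,P_s\varphi\bigr)\,ds$. One integrates its derivative against $\nu$, applies the already-proved integrated $S_t$ estimate to the (unbounded, polynomially weighted) functions $|\cdot|^4_{L^4}P_s\varphi$ with a slightly smaller exponent $\tilde p$, $p>\tilde p>1$, $\frac1{\tilde p}+\frac1q<1$, so that H\"older and the invariance of $\nu$ control $\int_H|x|^4_{L^4}|P_s\varphi(x)|^{\tilde p}\,\nu(dx)$ by polynomial moments of $\nu$ and $\|\varphi\|_{L^p(H,\nu)}^{\tilde p}$; the singularity $(t-s)^{-\frac{1+\alpha}{2}}$ is integrable since $\alpha<1$. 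Note also that this whole argument is carried out after integration against $\nu$, so no uniform-in-$x$ pointwise bound for $D_xP_t\varphi$ is ever needed. To repair your proof you would have to replace the weight-removal step by this (or an equivalent) perturbation identity; as written, the central estimate of your proposal fails even in the Gaussian case.
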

\begin{proof}
We first prove a similar estimate for $S_t$. Using Lemma \ref{l2} we have by H\"older inequality
$$
\begin{array}{l}
\ds \left|\int_H D_xS_t\varphi(x)\cdot h(x) \,\nu(dx)\right| \\
\le \ds c\, e^{ct}(1+t^{-\frac{1+\alpha}{2}})\int_H (1+|x|_{L^6}^3)\,\left[\E\left( \varphi^p(X(t,x)\right)\right]^{1/p} |h(x)|_{-\alpha} \nu(dx)\\
\ds \le c\, e^{ct}(1+t^{-\frac{1+\alpha}{2}})\left[ \int_H (1+|x|_{L^6}^3)^r \nu(dx)\right]^{1/r}\\
\\
\ds\times \left[ \int_H \E\left( \varphi^p(X(t,x)\right)\nu(dx) \right]^{1/p} 
\left[ \int_H |h(x)|_{-\alpha}^q \nu(dx)\right]^{1/q}.
\end{array}
$$
with $\frac1p+\frac1q+\frac1r=1$. Thus by Proposition 2.3 in \cite{DaDe07} and the invariance of $\nu$:
$$
\left|\int_H D_xS_t\varphi(x)\cdot h(x) \, \nu(dx)\right| \le c e^{ct} (1+t^{-\frac{1+\alpha}2})\|\varphi\|_{L^p(H,\nu)}\left(\int_H  |h(x)|_{-\alpha}^q \nu(dx)
\right)^{1/q}.
$$
We then proceed as in \cite{DaDe07} to get a similar estimate on $P_t$. We write
$$
P_t\varphi(x)=S_t\varphi(x)+K\int_0^tS_{t-s}(|x|_{L^4}^4P_s\varphi)ds.
$$
It follows that, using the estimate above with $p>\tilde p>1$ such  that $\frac1{\tilde p}+\frac1q<1$:
$$
\begin{array}{l}
\ds \left|\int_H D_xP_t\varphi(x)\cdot h(x) \nu(dx)\right| \le  ce^{ct} (1+t^{-\frac{1+\alpha}2})\|\varphi\|_{L^p(H,\nu)}\left(\int_H  |h(x)|_{-\alpha}^q \nu(dx)
\right)^{1/q}\\
\\
\ds+ K\int_0^tce^{c(t-s)} (1+(t-s)^{-\frac{1+\alpha}2})\left(\int_H  |x|_{L^4}^4 |P_s \varphi(x)|^{\tilde p} d\nu(dx)
\right)^{1/\tilde p}\\
\\
\ds\times  \left(\int_H  |h(x)|_{-\alpha}^q \nu(dx)\right)^{1/q}\, ds .
\end{array} 
$$
The result follows by H\"older inequality and the invariance of $\nu$.
\end{proof}

Theorem \ref{t1} follows directly from the following result thanks to the invariance of $\nu$ and taking for instance $t=1$.
\begin{Proposition}
For all  $p>1$, $\delta>0$, there exists a constant such that for $\varphi$ Borel bounded,      and all $h\in H^{{1+\delta}}(0,1)$, we have
\begin{equation}
\label{e8}
\left| \int_H P_t(D_x\varphi\cdot h)(x)\, \nu(dx)\right|\le ce^{ct}(1+t^{-1/2})\|\varphi\|_{L^p(H,\nu)}|h|_{1+\delta}
\end{equation}

 \end {Proposition}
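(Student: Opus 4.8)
The plan is to start from the (formal) identity \eqref{e8z} and integrate it against the invariant measure. Applying \eqref{e8z} to the function $D_x\varphi\cdot h$, integrating over $H$ with respect to $\nu$, and using the invariance $\int_H P_{t-s}\psi\,d\nu=\int_H\psi\,d\nu$ to remove the semigroup inside the time integral, one obtains
\begin{equation}
\label{eq:IBPplan}
\int_H P_t(D_x\varphi\cdot h)(x)\,\nu(dx)=\int_H D_xP_t\varphi(x)\cdot h\,\nu(dx)-\int_0^t\int_H\langle Ah+b'(x)h,\,D_xP_s\varphi(x)\rangle\,\nu(dx)\,ds .
\end{equation}
As recalled at the beginning of this section, such identities are first established for $\varphi\in\mathcal E_A(H)$ on a Galerkin approximation and then extended to Borel bounded $\varphi$; since all bounds below involve only $\|\varphi\|_{L^p(H,\nu)}$ and $|h|_{1+\delta}$, this passage to the limit is routine. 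It remains to bound the two terms on the right of \eqref{eq:IBPplan}. The first one is easy: applying Lemma \ref{l3} with $\alpha=0$, with the constant field $x\mapsto h$ and with any $q>1$ such that $\frac1p+\frac1q<1$, and using $|h|_0=|h|\le c|h|_{1+\delta}$, one gets a bound $ce^{ct}(1+t^{-1/2})\|\varphi\|_{L^p(H,\nu)}|h|_{1+\delta}$.

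The second term is the substantive part, and I expect it to be the main obstacle. Here one applies Lemma \ref{l3}, for each fixed $s\in(0,t)$, to the field $x\mapsto Ah+b'(x)h$, which is genuinely valued in a negative Sobolev space rather than in $H$; the crucial choice is that of $\alpha$. I would fix $\alpha<1$ with $\alpha\ge 1-\delta$ and $1-\alpha<\tfrac12$; such an $\alpha$ exists precisely because $\delta>0$, and this is the only place where the assumption $\delta>0$ (as opposed to $\delta=0$) is used. The key estimate to establish is then
\begin{equation}
\label{eq:nlplan}
\left(\int_H |Ah+b'(x)h|_{-\alpha}^q\,\nu(dx)\right)^{1/q}\le C\,|h|_{1+\delta},
\end{equation}
for $q$ with $\frac1p+\frac1q<1$. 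To prove it I would use $Ah=\partial_\xi^2 h$ and $b'(x)h=\partial_\xi(2xh)$ to write $Ah+b'(x)h=\partial_\xi(\partial_\xi h+2xh)$, so that, since $\partial_\xi\colon H^{1-\alpha}(0,1)\to H^{-\alpha}(0,1)$ is bounded,
$$|Ah+b'(x)h|_{-\alpha}\le c\,|\partial_\xi h+2xh|_{1-\alpha}\le c\,|h|_{2-\alpha}+c\,|xh|_{1-\alpha}.$$
Here $|h|_{2-\alpha}\le c|h|_{1+\delta}$ because $2-\alpha\le 1+\delta$; and since $1-\alpha$ is a small positive exponent and $h\in H^{1+\delta}(0,1)\hookrightarrow C^{1/2+\delta}([0,1])$, the multiplication estimate in $H^{1-\alpha}(0,1)$ gives $|xh|_{1-\alpha}\le c\,|h|_{C^{1/2+\delta}}\,|x|_{1-\alpha}\le c\,|h|_{1+\delta}\,|x|_{1-\alpha}$. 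Therefore \eqref{eq:nlplan} reduces to the finiteness of $\int_H|x|_{1-\alpha}^q\,\nu(dx)$, which holds because $\nu$ has finite moments of every order in $H^\beta(0,1)$ for all $\beta<\tfrac12$ (here $\beta=1-\alpha$); this is a known regularity property of the invariant measure, see \cite{DaDe07}, \cite{DPG} and the references therein.

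It then remains to assemble the estimates. Inserting \eqref{eq:nlplan} into Lemma \ref{l3} bounds the inner $\nu$-integral in \eqref{eq:IBPplan} by $c\,e^{cs}(1+s^{-(1+\alpha)/2})\,\|\varphi\|_{L^p(H,\nu)}\,|h|_{1+\delta}$; since $\alpha<1$, the factor $s^{-(1+\alpha)/2}$ is integrable near $s=0$, so $\int_0^t(\cdots)\,ds\le c\,e^{ct}(t+t^{(1-\alpha)/2})\,\|\varphi\|_{L^p(H,\nu)}\,|h|_{1+\delta}$. Adding the bound on the first term and absorbing the polynomial factors in $t$ into $c\,e^{ct}(1+t^{-1/2})$ (for $t\le 1$ the factor $t+t^{(1-\alpha)/2}$ is bounded, and for $t\ge 1$ it is dominated by $e^{ct}$) yields \eqref{e8}. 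Finally, Theorem \ref{t1} follows by choosing $t=1$ and invoking the invariance of $\nu$ once more, as indicated before its statement. The delicate point throughout is \eqref{eq:nlplan}: it forces $\alpha$ strictly below $1$ — so that the time singularity stays integrable and the product estimate against $x\in H^{1-\alpha}$ is available — while at the same time requiring $2-\alpha\le 1+\delta$ for the linear part, a combination possible only when $\delta>0$.
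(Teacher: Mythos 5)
Your proposal follows the paper's own proof essentially step for step: integrate \eqref{e8z} against $\nu$, use invariance to strip $P_{t-s}$, bound the first term by Lemma \ref{l3} with $\alpha=0$, and treat the commutator term by Lemma \ref{l3} with $\alpha=1-\delta$ (small $\delta$ WLOG), writing $Ah+b'(x)h=\partial_\xi(\partial_\xi h+2xh)$ and reducing the nonlinear part to a bound of the form $|xh|_\delta\le c\,|x|_\delta\,|h|_{1+\delta}$ together with a $q$-th moment of $|x|_\delta$ under $\nu$. The only real divergence is that you dispose of that moment bound by citation, whereas the paper devotes the last part of its proof to establishing it (decomposition $X=Y+z_\alpha$, an energy estimate on $Y$, moments of the stochastic convolution, and the invariance of $\nu$), since \cite{DaDe07} as cited only supplies $L^p$-moments rather than $H^\delta$-moments; the fact you invoke is true, but in the paper it is proved rather than quoted.
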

 \begin{proof} By Poincar\'e inequality, it is no loss of generality to assume $\delta <\min\{2(1-\frac1p), \frac12\}$.

We start by integrating \eqref{e8z} on $H$: 
\begin{equation}
\label{e21}
\begin{array}{lll}
 \ds \int_H P_t(D_x\varphi\cdot h)(x)\nu(dx)&=&\ds \int_H(D_xP_t\varphi(x)\cdot h)\,\nu(dx)\\
 \\
 &&- \ds  \int_0^t\int_H P_{t-s}[(\langle Ah+b'(x)h  ,D_xP_s\varphi(x) \rangle)]ds \,\nu(dx).
 \end{array}
\end{equation}
 Then by Lemma \ref{l3} we deduce
$$
\begin{array}{ll}
\ds\left|\int_H P_t(D_x\varphi \cdot h)(x)\,  \nu(dx)\right| &\ds \le c e^{ct} (1+t^{-\frac{1}2})\|\varphi\|_{L^p(H,\nu)}|h|
\\
&\ds +\left|\int_H  \int_0^tP_{t-s}[(Ah+b'(\cdot)\cdot h,D_xP_{s}\varphi)]ds \, \nu(dx)\right|.
\end{array} 
$$
By the invariance of $\nu$:
$$
\int_H  \int_0^tP_{t-s}[(Ah+b'(\cdot)\cdot h,D_xP_{s}\varphi)]ds \, \nu(dx)=
\int_H  \int_0^t (Ah+b'(\cdot)\cdot h,D_xP_{s}\varphi)ds \, \nu(dx).
$$
Therefore, by Lemma \ref{l3} with $\alpha=1-\delta$ and $q=\frac2\delta$:
$$
\begin{array}{l}
\ds \left|\int_H  \int_0^tP_{t-s}[(Ah+b'(\cdot)\cdot h,D_xP_{s}\varphi)]ds \, \nu(dx)\right|\\
\ds \le \int_0^t c e^{c(t-s)} (1+s^{-1+\frac{\delta}2})\|\varphi\|_{L^p(H,\nu)}
\left(\int_H  |Ah+b'(\cdot)\cdot h|_{-1+\delta}^{\delta/2} \nu(dx)\right)^{\delta/2}.
\end{array}
$$
Note that
$$
\left| b'(x)\cdot h\right|_{-1+\delta} =\left|\partial_\xi \left( xh\right)\right|_{-1+\delta}\le c 
\left| xh\right|_{\delta}
$$
Then, we have:
$$
|xh|\le c|x|\,|h|_1
$$
by the embedding $H^1\subset L^\infty$ and
$$
|xh|_1\le c|x|_1\,|h|_1,
$$
since $H^1$ is an algebra.
We deduce by interpolation
$$
|xh|_\delta\le c|x|_\delta |h|_1.
$$
It follows
$$
\begin{array}{l}
\ds  \left|\int_H  \int_0^tP_{t-s}[(Ah+b'(\cdot)\cdot h,D_xP_{s}\varphi)]ds \, \nu(dx)\right|\\
\ds   \le c_\delta e^{ct} \|\varphi\|_{L^p(H,\nu)}\left(1+ \int_H  |x|_\delta^{\delta/2} \nu(dx)\right)^{2/\delta} |h|_{1+\delta}.
\end{array}
$$
We need to estimate $\int_H  |x|_\delta^{\delta/2} \nu(dx)$. 
We use the notation of \cite[Proposition 2.2]{DaDe07}
$$
\begin{array}{l}
|X(s,x)|_\delta\le |Y(s,x)|_\delta+|z_\alpha(s)|_\delta\\
\\
\le |Y(s,x)|^{1-\delta}\,|Y(s,x)|_1^\delta+|z_\alpha(s)|_\delta.
\end{array} 
$$
Using computation in \cite[Proposition 2.2]{DaDe07}, we obtain
$$
\sup_{t\in [0,1]} |Y(t,x)|^2 +\int_0^1 |Y(s,x)|^2_1 ds \le c(|x|^2 + \kappa) 
$$
where $\kappa$ is a random variable with all moments finite. 
It follows by \eqref{e1a}:
$$
\E\left( \int_0^1 |Y(s,x)|^{2/\delta}_\delta \, ds\right)
\le \E\left( \int_0^1 |Y(s,x)|^{2(1-\delta)/\delta} |Y(s,x)|^{2}_1 \, ds\right)\le c(|x|^2+1)^{1/\delta} 
$$
Genelarizing slightly Proposition 2.1 in \cite{DaDe07}, we have:
$$
\E\left(|z_\alpha(t)|_\delta^p\right)\le c_{\delta,p}
$$
for $t\in[0,1]$, $\delta<1/2$, $\alpha\ge 1$, $p\ge 1$.
We deduce:
$$
\E\left( \int_0^1 |X(s,x)|^{2/\delta}_\delta \, ds\right)\le c(|x|^2+1)^{1/\delta}.
$$
Integrating with respect to $\nu$ and using Proposition 2.3 in \cite{DaDe07} we deduce:
$$
\int_H  |x|_\delta^{\delta/2} \nu(dx)\le c_{\delta}
$$

Then \eqref{e8} follows.
\end{proof}

\end{document}